\newtheorem{theorem}{Theorem}[section]
\newtheorem{definition}[theorem]{Definition}
\newtheorem{lemma}[theorem]{Lemma}
\newcommand{\marrow}[3]{\ensuremath{#1\colon\!#2\!\rightarrowtail\!#3}}
\newcommand{\clmarrow}[2]{\ensuremath{#1\!\rightarrowtail\!#2}}
\newcommand{\eparrow}[3]{\ensuremath{#1\colon\!#2\!\twoheadrightarrow\!#3}}
\newcommand{\cleparrow}[2]{\ensuremath{#1\!\twoheadrightarrow\!#2}}
\newcommand{\cprod}[2]{\ensuremath{#1\!\times\! #2}} 
\mathchardef\mhyphen="2D
\title[Set theory in higher order arithmetic]{Interpreting set theory in higher order arithmetic}
\begin{document}

\begin{abstract}A folk theorem says higher order arithmetic has the proof theoretic strength of Zermelo Frankel with limited power set.   This paper proves the theorem for two versions of power set, plus global well-ordering, and V=L.\end{abstract}

\maketitle

A folk theorem says $n$-th order arithmetic $Z_n$  has the proof theoretic strength of Zermelo Frankel set theory with restricted power set.  No precise statement has been published beyond the level of $Z_2$ and $\mathrm{ZF}$ with no power set axiom.   This paper describes and proves several versions.  First $\mathrm{ZF}[n]$ is $\mathrm{ZF}$ without the power set axiom but positing $n$ successive power sets of $\omega$: 
   \[ \beth_0=\omega \qquad \mbox{and} \qquad \beth_{i+1}=\mathcal{P}(\beth_i)\]
A proper extension $\mathrm{ZF}[n^+]$ says every set has a set of all its subsets smaller than $\beth_{n}$.  This reduces  appeals to power sets in applications. The main argument proves $Z_{n+2}$  interprets $\mathrm{ZF}[n]+(\mathrm{V=L})$ so all these theories are inter-interpretable. 

\section{The set theories  $\mathrm{ZF}[n]$, $\mathrm{ZF}[n^+]$, and $\mathrm{ZFG}[n^+]$}
The set theory $\mathrm{ZF}[0]$, often called ZF$-$, is Zermelo-Frankel without power set.  The axioms are: 

\begin{itemize}
    \item Extensionality: $ \forall z (z \in x \leftrightarrow z \in y) \rightarrow x = y $.
    \item Empty set: $\exists y \forall z \lnot [ z  \in y ]$.
    \item Regularity: $\exists a ( a \in x) \rightarrow \exists y ( y \in x \land \lnot \exists z (z \in y \land z \in x))$.
    \item Pair set: $\exists w \forall z [ z \in w \leftrightarrow (z=x \lor z=y) ]$.
    \item Sum set: $\exists u \,  \forall z [ z\in u \leftrightarrow \exists  y\,(z \in y \land y \in x) ]$.
    \item Infinity: 
               $ \exists x [\varnothing \in x \land \forall y (y \in x \rightarrow y\cup \{y\}  \in x)]$.
    \item Replacement:  For any formula $\phi(x,y) $ in the language of ZF:
                       \[  \forall x ( x\in w \rightarrow \exists! y\ \phi(x,y) ) \rightarrow \exists v\, 
                               \forall y \bigl(y\in v \leftrightarrow \exists x\, (x\in w \land \phi(x,y))\]
\end{itemize}

Examples show how replacement does some things commonly done by power set. 

\begin{theorem}  \emph{(In $\mathrm{ZF}[0]$)}  Any two sets have a cartesian product \cprod{A}{B}.
\end{theorem}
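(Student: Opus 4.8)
The plan is to build \cprod{A}{B} from the bottom up using pair set, two applications of replacement, and sum set, since without power set we cannot simply separate the product out of $\mathcal{P}(\mathcal{P}(A\cup B))$. I would first fix the Kuratowski coding of ordered pairs, $(a,b)=\{\{a\},\{a,b\}\}$, and observe that pair set (applied three times) guarantees that for any $a,b$ the set $(a,b)$ exists, while extensionality guarantees it is unique. Thus, with $a$ held as a parameter, the formula $\phi(b,y)$ expressing ``$y=(a,b)$'' provably satisfies $\exists! y\,\phi(b,y)$ for every $b$.

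Next, for a fixed $a\in A$, I would apply replacement to the set $B$ along $\phi(b,y)$ to obtain the ``row''
\[ R_a \;=\; \{a\}\times B \;=\; \{\,(a,b) : b\in B\,\}. \]
Then I would apply replacement a second time, now to the set $A$, along the formula $\psi(a,z)$ expressing ``$z=R_a$''; here the inner existence-and-uniqueness requirement is exactly the previous paragraph's conclusion packaged with extensionality, so $\exists! z\,\psi(a,z)$ holds and replacement yields the family
\[ F \;=\; \{\, R_a : a\in A \,\}. \]
Finally, sum set applied to $F$ produces $\bigcup F$, and I would verify by a routine membership check that $\bigcup F$ has as its elements exactly the pairs $(a,b)$ with $a\in A$ and $b\in B$; this set is \cprod{A}{B}.

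The argument is essentially bookkeeping, so the one point that actually needs care---and the only place the proof could go wrong---is checking that each replacement instance is legitimate, i.e.\ that the governing formula is genuinely functional on its domain. Concretely I must confirm $\exists! y\,\phi(b,y)$ for every $b\in B$ and $\exists! z\,\psi(a,z)$ for every $a\in A$ \emph{before} invoking replacement; both reduce to existence (from iterated pair set, and from the first replacement, respectively) together with uniqueness (from extensionality). Once these functionality clauses are in hand, the three existence axioms combine mechanically, and no appeal to power set is made at any stage.
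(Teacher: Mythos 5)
Your proposal is correct and follows the paper's proof exactly: Kuratowski pairs via iterated pair set, replacement over $B$ to form each row $\cprod{\{a\}}{B}$, replacement over $A$ to collect the rows, and sum set to finish. Your added care in verifying the functionality premises of each replacement instance is the only difference, and it is just the bookkeeping the paper leaves implicit.
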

\begin{proof}  Form Kuratowski ordered pairs by repeated pair sets.   For any sets $a,B$, replace each $b\in B$ by $\langle a,b\rangle$ to get \cprod{\{a\}}{B}.  Then replace each $a\in A$ by $\cprod{\{a\}}{B}$ to get  $ \{\cprod{\{a\}}{B}\ |\ a\in A\}$ with sum set $\cprod{A}{B}=\{\langle a,b\rangle|\ a\in A,b\in B\}$.
\end{proof}

\begin{theorem}\label{T:eqrelzf}  \emph{(In $\mathrm{ZF}[0]$)}  Every equivalence relation $R\subseteq \cprod{A}{A}$ has a quotient.
\end{theorem}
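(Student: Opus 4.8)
The plan is to build the quotient directly as the set of equivalence classes, $A/R=\{[a]\mid a\in A\}$ with $[a]=\{b\in A\mid \langle a,b\rangle\in R\}$, and to obtain it by two applications of replacement rather than any appeal to power set. This is the point worth making: each class $[a]$ is a subset of $A$, so the naive reading places $A/R$ inside $\mathcal{P}(\mathcal{P}(A))$ and seems to demand power set, yet replacement alone suffices.

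First I would check that each equivalence class $[a]$ is a set. This is an instance of separation, which is not among the listed primitive axioms but is derivable from replacement: given a set $w$ and a formula $\psi$, if some $z_0\in w$ satisfies $\psi$, apply replacement to the functional formula sending $x$ to $x$ when $\psi(x)$ holds and to $z_0$ otherwise, whose image is exactly $\{x\in w\mid\psi(x)\}$; and if no element of $w$ satisfies $\psi$, the separated set is empty and exists by the empty set axiom. Applying this with $w=A$ and $\psi(b)\equiv\langle a,b\rangle\in R$ yields the class $[a]$ as a set.

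Next, the assignment $a\mapsto[a]$ is functional, since for each $a\in A$ there is exactly one set equal to $[a]$, uniqueness guaranteed by extensionality. Replacement applied to this assignment over $A$ then produces $A/R=\{[a]\mid a\in A\}$ as a set. The accompanying quotient map $q=\{\langle a,[a]\rangle\mid a\in A\}$ is a subset of \cprod{A}{(A/R)}, a set by the cartesian product theorem, and is itself a set by one more use of replacement on $a\mapsto\langle a,[a]\rangle$.

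Finally I would record the defining properties, all immediate from $R$ being an equivalence relation: $q$ is surjective onto $A/R$ by construction, and $[a]=[a']$ exactly when $\langle a,a'\rangle\in R$, using reflexivity for one direction and symmetry with transitivity for the other. I expect no serious obstacle: the mathematical content is just the observation that replacement, not power set, manufactures the collection of classes, and the only point needing care is the derivation of separation from replacement noted above.
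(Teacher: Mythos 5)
Your proof is correct and takes essentially the same approach as the paper: a single application of replacement to the functional assignment $a\mapsto[a]$ (the paper's $R_a$) over $A$ produces the quotient. You are simply more explicit than the paper on two points it leaves implicit --- deriving separation from replacement (needed to form each class $[a]$, since separation is not among the listed axioms of $\mathrm{ZF}[0]$) and constructing the quotient map.
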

\begin{proof}  Replace each $a\in A$ by its set of relata $R_a=\{a'\in A|\ \langle a,a'\rangle\in R\}$ to get the set of equivalence classes $ A/R =  \{\pi \subseteq A|\ \exists a\in A\ (\pi = R_a)\,\}$.
\end{proof}

The usual proof works in $\mathrm{ZF}[0]$ to show there is a unique set $\omega$ satisfying infinity plus induction, call it the set of numbers. 
\begin{theorem}\label{T:finite}  \emph{(In $\mathrm{ZF}[0]$)}  Every set $A$ has a set $A^{<\omega}$ of all finite strings and a set $\mathrm{Fin}(A)$ of all finite subsets of $A$.
\end{theorem}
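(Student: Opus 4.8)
The plan is to build the finite strings length by length and then collect them using replacement and sum set, never appealing to power set. A finite string of length $n$ is a function $s\colon n\to A$, and I write $A^n$ for the set of all such strings. The base case is $A^0=\{\varnothing\}$, the single empty string, which exists by empty set and pair set. For the successor step, a string of length $n+1$ is a string of length $n$ together with a value at $n$, so there is a natural bijection $A^{n+1}\cong\cprod{A^n}{A}$: given $s\in A^n$ and $b\in A$, form $s\cup\{\langle n,b\rangle\}$. Hence, granting that $A^n$ exists, the cartesian product theorem gives the product $\cprod{A^n}{A}$ and replacement turns it into $A^{n+1}=\{\,s\cup\{\langle n,b\rangle\}\mid s\in A^n,\ b\in A\,\}$.

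This is the crux, and the step I expect to be the main obstacle: without power set one cannot directly assert that the collection of all functions $n\to A$ forms a set, so instead I would prove by induction on $\omega$ that the set $A^n$ exists for every $n$, using the empty-string base case and the product-plus-replacement successor step just described. Uniqueness of $A^n$ follows from extensionality, so the formula ``$y=A^n$'' is functional on $\omega$. This induction is the heart of the argument; once it is in place the remaining steps are routine, since the earlier cartesian-product result is exactly what lets the successor step avoid power set.

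With functionality established, replacement sends $\omega$ to the set $\{A^n\mid n\in\omega\}$, and sum set forms its union
\[ A^{<\omega}=\bigcup_{n\in\omega}A^n, \]
the set of all finite strings. Finally, every finite subset of $A$ is the range of some finite string (enumerate its elements), and conversely the range of a finite string is a finite subset of $A$. So I would apply replacement to $A^{<\omega}$ along the map $s\mapsto\mathrm{ran}(s)=\{s(i)\mid i\in\mathrm{dom}(s)\}$ to obtain
\[ \mathrm{Fin}(A)=\{\,\mathrm{ran}(s)\mid s\in A^{<\omega}\,\}, \]
the set of all finite subsets of $A$. The only place power set might be missed is in forming $A^n$, and the recursion through cartesian products is exactly what circumvents it.
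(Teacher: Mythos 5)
Your proposal is correct and takes essentially the same route as the paper, which likewise replaces each $n\in\omega$ by the product set $A^n$, forms $A^{<\omega}$ by sum set, and then replaces each tuple by the set of its entries to obtain $\mathrm{Fin}(A)$. The only difference is that you spell out the induction showing each $A^n$ exists and that $n\mapsto A^n$ is functional before invoking replacement, details the paper's two-line proof leaves implicit.
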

\begin{proof} Replacing each $n\in \omega$ by the product set $A^n$ gives $\{A^n|\ n\in \omega\}$ with sum set 
 $A^{<\omega}$\!.  Replacing each $n$-tuple by the set of its entries gives  $\mathrm{Fin}(A)$.  
\end{proof}

The $\mathrm{ZF}[1]$ axioms posit $\omega$ has a power set $\mathcal{P}(\omega)$, while $\mathrm{ZF}[2]$ posits a power set $\mathcal{PP}(\omega)$, and so on for all $\mathrm{ZF}[n]$.  Clearly $\mathrm{ZF}[n]$ interprets $n+2$ order arithmetic~$Z_{n+2}$.  Then $\mathrm{ZF}[n^+]$ is $\mathrm{ZF}[0]$ plus an axiom saying every set $A$ has  a set of all subsets smaller than $\beth_{n}$.  That is, a set of all $S\subseteq A$ with a one-to-one function \clmarrow{S}{\beth_{n}} and no bijection.  If  $\mathrm{ZFC}$ is consistent then $\mathrm{ZF}[n]$ does not imply $\mathrm{ZF}[n^+]$.

\begin{theorem}\label{T:properext} $\mathrm{ZFC}$ proves no $\mathrm{ZF}[n]$ even implies $\mathrm{ZF}[1^+]$.
\end{theorem}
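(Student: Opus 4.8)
The plan is to prove the non-implication semantically, working entirely inside $\mathrm{ZFC}$: for each fixed $n$ I will exhibit a transitive set model of $\mathrm{ZF}[n]$ in which $\mathrm{ZF}[1^+]$ fails. Since first-order logic is sound (provably in $\mathrm{ZFC}$), the existence of such a model shows $\mathrm{ZF}[n]\not\vdash\mathrm{ZF}[1^+]$, and as the whole argument is carried out in $\mathrm{ZFC}$ this yields the theorem. The models will be the sets $H_\kappa$ of sets hereditarily smaller than $\kappa$, for a carefully chosen cardinal $\kappa$.

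I would first invoke the standard fact, provable in $\mathrm{ZFC}$, that for regular uncountable $\kappa$ the structure $\langle H_\kappa,\in\rangle$ satisfies every axiom of $\mathrm{ZF}[0]$: extensionality, foundation, pairing, union, infinity, separation, and, using regularity of $\kappa$, replacement (the image of a set of size $<\kappa$ under a definable function is again a set of $<\kappa$ elements each of hereditary size $<\kappa$, hence lies in $H_\kappa$). Being transitive, $H_\kappa$ computes subsets, injections, and cardinalities below $\kappa$ correctly; in particular $\mathcal{P}(x)^{H_\kappa}=\mathcal{P}(x)$ whenever $2^{|x|}<\kappa$. Hence, provided $\beth_n<\kappa$, each of $\beth_1,\dots,\beth_n$ is a genuine element of $H_\kappa$ and is computed correctly there, so $H_\kappa\models\mathrm{ZF}[n]$.

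It remains to choose $\kappa$ so that $\mathrm{ZF}[1^+]$ also fails. I would fix a cardinal $\lambda$ with $\mathrm{cf}(\lambda)=\aleph_0$ and $\lambda$ above both $\beth_n$ and $2^{\aleph_0}$ (for instance the $\omega$-th cardinal successor of $\beth_n$, which is enough since $2^{\aleph_0}=\beth_1\le\beth_n$ once $n\ge 1$), and set $\kappa=\lambda^+$. Then $\kappa$ is regular, being a successor, and $\kappa>\beth_n$, so $H_\kappa\models\mathrm{ZF}[n]$ and moreover $\beth_1\in H_\kappa$, so that $\mathrm{ZF}[1^+]$ is even meaningfully statable there. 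By König's theorem $\lambda<\lambda^{\aleph_0}$, whence $\lambda^{\aleph_0}\ge\lambda^+=\kappa$. Now take $A=\lambda\in H_\kappa$. Every countable subset of $\lambda$ lies in $H_\kappa$ and is, in the sense of $H_\kappa$, a subset of $A$ admitting an injection into $\beth_1$ with no bijection, since $\aleph_0<2^{\aleph_0}$. These subsets number $\lambda^{\aleph_0}\ge\kappa$, so the collection $\{\,S\subseteq A : |S|<\beth_1\,\}$ has cardinality at least $\kappa$ and is therefore not an element of $H_\kappa$. Thus $H_\kappa$ contains no set of all subsets of $A$ smaller than $\beth_1$, i.e. $H_\kappa\models\neg\,\mathrm{ZF}[1^+]$.

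The only real obstacle is meeting the two requirements at once. Making $H_\kappa$ a model of $\mathrm{ZF}[0]$ forces $\kappa$ regular, and accommodating the iterated power sets forces $\kappa>\beth_n$; yet one must keep $\kappa$ from being a strong limit (e.g.\ inaccessible), since otherwise $H_\kappa$ would satisfy full power set and hence $\mathrm{ZF}[1^+]$. Taking $\kappa=\lambda^+$ just above a cardinal $\lambda$ of countable cofinality threads this needle: regularity is automatic for a successor cardinal, while König's theorem guarantees that $\lambda$ already carries $\kappa$-many countable subsets, exactly the kind of oversized definable collection that replacement in $H_\kappa$ is powerless to gather. Since this construction is uniform in $n$, $\mathrm{ZFC}$ proves that no $\mathrm{ZF}[n]$ implies $\mathrm{ZF}[1^+]$.
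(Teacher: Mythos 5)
Your proof is correct and takes essentially the paper's approach: a hereditary-cardinality model $H_\kappa$ with $\kappa$ the successor of a cardinal $\lambda$ of countable cofinality, where K\"onig's inequality $\lambda^{\aleph_0}>\lambda$ shows the countable subsets of $\lambda$ are too numerous for the required set to exist in the model. The only divergence is your choice of cardinal: the paper uses the single model of sets hereditarily of cardinality $\le\aleph_\omega$ for all $n$ simultaneously, which tacitly requires $\beth_n\le\aleph_\omega$ (e.g.\ GCH, available by relativizing to $\mathrm{L}$ since the non-provability statement is arithmetic), whereas your $n$-dependent $\lambda$ above both $\beth_n$ and $2^{\aleph_0}$ works in $\mathrm{ZFC}$ outright and handles $\beth_1$'s presence explicitly, at the modest cost of losing the paper's one-model-for-all-$n$ uniformity (which taking $\lambda=\beth_\omega$ in your construction would recover).
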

\begin{proof} In $\mathrm{ZFC}$ the set of sets hereditarily of cardinality $\leq \aleph_{\omega}$\! models every $\mathrm{ZF}[n]$ while $\aleph_{\omega}$\! has more than $\aleph_{\omega}$\! countable subsets by  K\"onig's inequality.
\end{proof}

Let $\mathrm{ZFG}[n^+]$ be $\mathrm{ZF}[n^+]$ plus global well-ordering.  It posits a linear order $\leq_{\gamma}$ on sets, where every proper initial segment of $\leq_{\gamma}$ is a well ordered set, and replacement allows formulas with  $\leq_{\gamma}$. 

Standard constructibility arguments as in \citet[Chapter III]{CohenSet} work in $\mathrm{ZF}[0]$ to show $\mathrm{L}$ verifies global choice plus GCH in the sense that if $\aleph_{m}$ exists it is $\beth_{m}$.  But we will rather show $Z_{n+2}$ interprets $\mathrm{ZFG}[n^+]$ plus $\mathrm{V=L}$.  

\section{Requirements from higher order arithmetic}

\subsection{Basics }\label{S:basics}
Our $n$-th order arithmetic $Z_n$ uses successively higher types but no product types.  So first order arithmetic PA$=Z_1$  has  number terms.  Second order $Z_2$ adds second order terms for classes of numbers.   We write  $\forall_i$ or $\exists_i$ to quantify over $i$-th order variables.  We adopt the first order Peano axioms except that $Z_2$ and above state induction with a second order variable:
    \[ \forall_2 X\ [ (0\in X \land  \forall_1 y (y \in X \rightarrow S(y)  \in X)  
                             \rightarrow \forall_1 y\ ( y\in X)\,]\]

We adopt extensionality, and full comprehension for each order:
\begin{align*}  \forall_{i-1}x\ (\,x\in X \leftrightarrow x\in Y)  \rightarrow X=Y  & \\
                 \exists_{i+1} X\  \forall_i x&\ (x\in X \leftrightarrow \phi(x)\,)
\end{align*}
for any formula $\phi(x)$ with $X$ not free.   Compare \citet[p.~4]{SimpsonBook} extended to higher order, or \citet[pp.~192]{TakeutiProof} with extensionality but no product types.

\subsection{Sequences of classes}\label{SS:sequences}

Take any coding of ordered pairs $\langle j,k\rangle$ of numbers by numbers.  For each order $i$ define an indexing operator $\star_i$:
  \[  \forall_1 j,k\ ( j\star_1\! k = \langle j,k\rangle) \qquad  \forall_i X\, \forall_1 j\ ( j\star_i\! X = \{j\star_{i-1}\! Y\  | \ Y\in X\})\]
Think of $\langle j,k \rangle$ as the number $k$ with index $j$.  Then $j\star_i X$ is the class built up from $j$-indexed numbers just the way $X$ is built up from unindexed numbers. 

A \emph{sequence of $i$-th order classes} for $1< i\leq n+2$, is an $i$-th order class with every element $j$-indexed for some $j$.   The $j$-component of any class $x$ is the class of all $y$ with $j\star_i y\in x$. 

\begin{definition} For any class $\alpha$  and sequence $\sigma$ of classes, all of order $i>1$, write $\langle\alpha \rangle$ for the sequence with 0 component $\alpha$ and all others empty; and $\langle\alpha \rangle^\smallfrown\sigma$ for the sequence with $\alpha$ as $0$ component and each $m$  component re-indexed by $m+1$. 
\end{definition}

\begin{definition}A \emph{finite sequence of non-empty classes}, of length $l\in \omega$ is a sequence of classes with $k$-th component nonempty for $k< l$, and empty for $k\geq l$.
\end{definition}  

So a $0$ length sequence of nonempty classes is an empty class.

\section{ Interpreting sets as trees}
\subsection{Generalities on trees}

We interpret sets by trees where each node codes a set with its daughters as elements.  A tree is a class of finite sequences of nonempty classes such that each initial segment of a sequence in the class is also in it.  The daughters of a sequence are the sequences extending it by one entry.  

For example, for distinct nonempty order $n+1$ classes $\alpha,\beta,\gamma$,  the $n+2$ class of sequences $ \{ \langle\rangle, \langle\alpha\rangle, \langle\beta\rangle, \langle\beta,\alpha\rangle, \langle\gamma\rangle, \langle\gamma,\alpha\rangle, \langle\gamma,\beta\rangle, \langle\gamma,\beta,\alpha\rangle \}$ with $\langle\rangle$ empty, is a  tree:
  \[ \xymatrix@R=.2cm{ &  \langle\rangle \ar@{-}[dl] \ar@{-}[d]   \ar@{-}[dr] \\   
                                   \langle\alpha\rangle & \langle\beta\rangle \ar@{-}[d]  & \langle\gamma\rangle \ar@{-}[d] \ar@{-}[dr] \\  
                   & \langle\beta,\alpha\rangle & \langle\gamma,\alpha\rangle & \langle\gamma,\beta\rangle \ar@{-}[d] \\
                                       &&& \langle\gamma,\beta,\alpha\rangle } \]
Three nodes below $\langle\rangle$ show this codes a three element set.  No nodes below the leftmost node $\alpha$ shows this node codes the empty set.  Altogether this tree encodes the von~Neumann ordinal $2$, that is $ \{ \emptyset, \{\emptyset\},\{\emptyset,\{\emptyset\}\} \}$.  In this way the class of all strictly descending sequences of numbers codes the ordinal $\omega$.

Tree relations $\in^*,=^*$ representing membership and equality of sets are precisely defined at \citep[pp.264--65]{SimpsonBook}.

\subsection{$Z_2$ interprets $\mathrm{ZF}[0]+(\mathrm{V=L})$}\label{S:secondorder} This sets the stage for our general proof.   The set theory $\Pi ^1_{\infty}\text{-}\mathsf{CA}_0^{\mathrm{set}}$\!  \citep[p.~284]{SimpsonBook} is our $\mathrm{ZF}[0]$ without replacement but with comprehension and hereditary countability.  The axioms are:

\begin{itemize}
    \item Extensionality, empty set, regularity, pair set, sum set, and infinity
    \item Unrestricted comprehension.  For any formula $\phi$ in set theoretic language: 
                          \[ \   \forall u\ \exists v\ \forall x\ ( x\in v\ \leftrightarrow\ (x\in u \land \phi(x) ) \]
    \item Hereditary countability (every set lies in a countable transitive set): 
                \[  \forall u\ \exists v\ (u\subseteq v \land \mathrm{Trans}(v) \land \exists \text{ one-to-one }
                                             \marrow{g}{v}{\omega}     )    \]
\end{itemize}

\begin{definition} \quad
\begin{itemize}
    \item \emph{(In  $\Pi ^1_{\infty}\text{-}\mathsf{CA}_0^{\mathrm{set}}$)}  A \emph{suitable tree} is a set $T$\! of finite 
                     sequences of elements of  $\omega$, where $T$\! is closed under initial segments and has no path. 
                       I.e.~it has no infinite chain of sequences each daughter to the one before.
  \item \emph{(In $Z_{2}$)}  A \emph{suitable tree} is an order $2$ class $T$\! of finite sequences of numbers closed under
        initial segments and having no path.
\end{itemize}
\end{definition} 

\begin{theorem}\label{T:biinterpretation} $\Pi ^1_{\infty}\text{-}\mathsf{CA}_0^{\mathrm{set}}$\! is a conservative extension of $Z_2$\! when we interpret numbers in $Z_2$\! as  elements of $\omega$ and order 2 classes as subsets of $\omega$.
\end{theorem}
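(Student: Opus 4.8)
The plan is to prove conservativity by exhibiting a second, opposite-running relative interpretation $J$ of $\Pi ^1_{\infty}\text{-}\mathsf{CA}_0^{\mathrm{set}}$ back into $Z_2$, under which each set is coded by a suitable tree. Write $I$ for the given interpretation of $Z_2$ into the set theory: the number sort is interpreted by the elements of $\omega$ and the order $2$ sort by the definable collection of subsets of $\omega$ (a proper class in the set theory, but a legitimate interpreting domain). That $I$ makes the set theory an extension is routine — $Z_2$ comprehension goes over to separation on $\omega$, induction to the construction of $\omega$, class extensionality to set extensionality on subsets of $\omega$. The content is conservativity, and for that it suffices to prove the single implication: for every $Z_2$ sentence $\phi$, $Z_2$ proves $J(I(\phi)) \to \phi$. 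For then, if $\Pi ^1_{\infty}\text{-}\mathsf{CA}_0^{\mathrm{set}} \vdash I(\phi)$, applying the interpretation $J$ yields $Z_2 \vdash J(I(\phi))$, hence $Z_2 \vdash \phi$.

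First I would set up $J$. A set is coded by a suitable tree, that is an order $2$ class of finite sequences of numbers, closed under initial segments, with no path; membership and equality are the relations $\in^{*},=^{*}$ of \citet[pp.~264--65]{SimpsonBook}, where $=^{*}$ is the bisimulation equality making two trees equal when their families of immediate subtrees match up to $=^{*}$, and $S\in^{*}T$ means $S=^{*}$ some immediate subtree of $T$. The bulk of the work is checking that $Z_2$ proves the $J$-translation of each set-theory axiom. Extensionality is immediate from the definition of $=^{*}$; regularity follows from the no-path condition, which is exactly well-foundedness of the coded membership relation; empty set, pairing, and sum set are explicit tree constructions; and $\omega$ is witnessed by the tree of strictly descending sequences of numbers noted earlier.

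The two axioms carrying the real content are comprehension and hereditary countability. For unrestricted comprehension, given a tree $T$ coding $u$ and a set-theoretic formula $\phi$, I would form the tree retaining exactly those immediate subtrees of $T$ whose coded elements satisfy $\phi^{J}$; since $\phi$ may quantify over arbitrary sets, and sets are order $2$ objects, the defining condition is an arbitrary second-order formula, so this step consumes precisely full $\Pi^1_\infty$ comprehension --- and uses no more, which is what pins the strength to exactly $Z_2$. Hereditary countability is nearly free: each code is already a subset of $\omega$, hence countable, and one forms the transitive-closure tree by collecting all subtrees reachable by descending through sequences. The main obstacle throughout is that $\in^{*}$ and $=^{*}$ are defined by recursion along the well-founded tree order, so every verification must be carried out inside the apparatus $Z_2$ has for arithmetical and $\Pi^1_1$ transfinite recursion; here I would lean on Simpson for the existence and basic laws of these relations.

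Finally I would close the loop by analyzing $J\circ I$. Tracing the composite, a native number $n$ is sent to the tree coding the von Neumann ordinal $n$, and a native class $X\subseteq\omega$ to the tree whose immediate subtrees code exactly the $n\in X$. The assignments $n\mapsto(\text{tree of }n)$ and $X\mapsto(\text{tree of }X)$ then form a definable isomorphism, provably in $Z_2$, between the native $Z_2$ structure and its image under $J\circ I$: in particular $n\in X$ holds iff the tree of $n$ stands in $\in^{*}$ to the tree of $X$. A routine induction on formulas upgrades this to $Z_2 \vdash \phi \leftrightarrow J(I(\phi))$ for every $Z_2$ sentence, which delivers both that the set theory is an extension and the conservativity implication required above.
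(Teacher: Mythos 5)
Your proposal is correct and takes essentially the same approach as the paper: the paper's proof simply cites Simpson's Theorem~VII.3.34, whose content is exactly the two-way tree argument you reconstruct --- the suitable-tree interpretation $J$ of $\Pi^1_{\infty}\text{-}\mathsf{CA}_0^{\mathrm{set}}$ into $Z_2$ with $\in^*,=^*$, verification of the axioms (full comprehension carrying separation, countability of the node set carrying hereditary countability), and the provable round trip $Z_2 \vdash \phi \leftrightarrow J(I(\phi))$ yielding conservativity. The only difference is one of exposition: you spell out the details that the paper delegates wholesale to Simpson's ``long series of proofs.''
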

\begin{proof} Each theory proves every statement is equivalent to a statement about trees, and the two theories prove all the same statements on trees.   Simpson's  Theorem VII.3.34, sums up a long series of proofs.
\end{proof}

This interpretation of set theory will not provably satisfy replacement, since all trees in $Z_2$ code countable sets and even $\mathrm{ZF}$ does not prove every countable family of countable sets has a countable union.  So we go to constructibility.  \citet[\S VII.4]{SimpsonBook} shows a statement $u\in \mathrm{L}^{\omega}$\! in $\Pi ^1_{\infty}\text{-}\mathsf{CA}_0^{\mathrm{set}}$\! says $u$ is constructed from natural number parameters by some ordinal.  The \emph{constructibility interpretation} $\mathrm{L}^{\omega}$\! is given in $\Pi ^1_{\infty}\text{-}\mathsf{CA}_0^{\mathrm{set}}$\! by relativizing quantifiers to $\mathrm{L}^{\omega}$\!.  This interpretation has a definable well ordering but still suffers a lack of control over countability.   So within $\mathrm{L}^{\omega}$\! Simpson defines $\mathrm{HCL}(\emptyset)$, the \emph{hereditarily constructibly countable} sets. 

\begin{definition} \emph{(In $\Pi ^1_{\infty}\text{-}\mathsf{CA}_0^{\mathrm{set}}$\!)}
 Write $u\in \mathrm{HCL}(\emptyset)$\! to say there is a constructible surjection \eparrow{f}{\omega}{T}\! onto a transitive set $T$\! with $u\subseteq T$\!.
\[  \exists\, f\in  \mathrm{L}^{\omega}\ (\, \mathrm{Fcn}(f)\ \land\ 
                   \mathrm{dom}(f)=\omega  \land\  
                   u\subseteq \mathrm{rng}(f)\ \land\  \mathrm{Trans}({rng}(f))\, )\]
\end{definition} 

\begin{theorem}\label{T:hcl} The $\mathrm{HCL}(\emptyset)$\! interpretation in $\Pi ^1_{\infty}\text{-}\mathsf{CA}_0^{\mathrm{set}}$\! satisfies  $\mathrm{ZF}[0]+(\mathrm{V=L})$\! plus hereditary countability.  
\end{theorem}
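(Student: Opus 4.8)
The plan is to verify each axiom of $\mathrm{ZF}[0]$ under the $\mathrm{HCL}(\emptyset)$ interpretation, then $\mathrm{V=L}$, then hereditary countability, working inside $\Pi^1_\infty\text{-}\mathsf{CA}_0^{\mathrm{set}}$ and exploiting that $\mathrm{L}^\omega$ already models the constructibility interpretation with a definable global well-ordering $<_{\mathrm{L}}$. The first observation is that $\mathrm{HCL}(\emptyset)$ is transitive: if $u\in\mathrm{HCL}(\emptyset)$ with constructible surjection $f\colon\omega\twoheadrightarrow T$ onto a transitive $T$ with $u\subseteq T$, then any $x\in u$ lies in $T$, and $f$ itself (or its composite with the map coding $T$) witnesses $x\in\mathrm{HCL}(\emptyset)$. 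Transitivity makes extensionality, regularity, and empty set immediate, since each is downward absolute to a transitive subclass of $\mathrm{L}^\omega$; infinity holds because $\omega$ is constructible and countable. For pairing, given $u,v\in\mathrm{HCL}(\emptyset)$ with witnesses onto transitive $T_u,T_v$, I would set $T=T_u\cup T_v\cup\{u,v\}$, which is transitive and the range of a single constructible surjection from $\omega$ got by splicing the two given ones and appending $u,v$; then $\{u,v\}\subseteq T$ and $\{u,v\}$ is constructible by comprehension in $\mathrm{L}^\omega$. Union is covered by the same transitive witness, since $\bigcup u\subseteq T$.

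The heart of the theorem is replacement, which is exactly what the base theory lacks and what passing to $\mathrm{HCL}(\emptyset)$ is meant to restore; I expect this to be the main obstacle. Suppose $w\in\mathrm{HCL}(\emptyset)$ and a formula $\phi$ defines, relative to $\mathrm{HCL}(\emptyset)$, a function $x\mapsto F(x)$ on the elements of $w$. Since $w$ is hereditarily constructibly countable it is countable, so using $<_{\mathrm{L}}$ I fix a constructible enumeration $w=\{x_n\mid n\in\omega\}$. Each value $F(x_n)\in\mathrm{HCL}(\emptyset)$ has some constructible surjection onto a transitive set; taking the $<_{\mathrm{L}}$-least such witness makes $n\mapsto(T_n,f_n)$ constructible, being definable from $\phi$, $w$, and $<_{\mathrm{L}}$. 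I would then amalgamate the $f_n$ into one constructible surjection $g\colon\omega\twoheadrightarrow T$, where $T$ is the transitive closure of $\bigcup_n(T_n\cup\{F(x_n)\})$; this $T$ is a countable union of countable transitive sets, hence countable, and $g$ is constructible because the double sequence of the $f_n$ is. The image $I=\{F(x_n)\mid n\in\omega\}$ then satisfies $I\subseteq T=\mathrm{rng}(g)$ and exists as a constructible set by comprehension in $\mathrm{L}^\omega$, so $I\in\mathrm{HCL}(\emptyset)$. The global well-ordering inherited from $\mathrm{L}^\omega$ does the essential work: it converts ``each value has some witness'' into a single constructible choice of witnesses that can be glued.

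For $\mathrm{V=L}$ I would argue by absoluteness of the constructible hierarchy. The map $\alpha\mapsto L_\alpha$ is absolute between $\mathrm{HCL}(\emptyset)$ and $\mathrm{L}^\omega$ for ordinals $\alpha$ of $\mathrm{HCL}(\emptyset)$, and every such $\alpha$ is a countable ordinal, so $L_\alpha$ is itself countable and constructible and therefore lies in $\mathrm{HCL}(\emptyset)$. Since $\mathrm{L}^\omega$ satisfies $\mathrm{V=L}$, each $u\in\mathrm{HCL}(\emptyset)$ appears in some $L_\alpha$ with $\alpha$ a countable ordinal of $\mathrm{HCL}(\emptyset)$; as $L_\alpha\in\mathrm{HCL}(\emptyset)$ and the construction is computed identically inside the interpretation, $\mathrm{HCL}(\emptyset)$ sees $u$ as constructible, giving $\mathrm{HCL}(\emptyset)\models\mathrm{V=L}$. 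Finally, hereditary countability is essentially the definition read internally: for $u\in\mathrm{HCL}(\emptyset)$ the witnessing transitive set $T$ and surjection $f$ are countable and constructible, hence already elements of $\mathrm{HCL}(\emptyset)$, so $v=T\cup\{u\}$ made transitive, together with the induced one-to-one map into $\omega$, witnesses hereditary countability within the interpretation.
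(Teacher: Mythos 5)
Your skeleton matches the proof the paper actually relies on: the paper disposes of this theorem by citing Simpson's Theorem~VII.5.4 and noting that the cited proof establishes the strong choice scheme $\forall x\,\exists y\,\phi(x,y)\rightarrow\forall u\,\exists f\,\forall x\,(x\in u\rightarrow\phi(x,f(x)))$, which is exactly your least-witness-and-glue strategy for replacement. But your verification has a genuine gap at its central step. You claim the map $n\mapsto(T_n,f_n)$ of $<_{\mathrm{L}}$-least witnesses is ``constructible, being definable from $\phi$, $w$, and $<_{\mathrm{L}}$.'' Definability over the ambient universe does not confer constructibility, and in a theory whose only comprehension is separation it does not even guarantee that the graph is a set: forming $\{(n,f_n)\mid n\in\omega\}$, or the union $\bigcup_n T_n$, or a bound $\alpha$ with every $f_n\in\mathrm{L}^{\omega}_{\alpha}$, is itself a replacement/collection step --- the very thing being proved. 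The repair requires observing that $\phi$ relativized to $\mathrm{HCL}(\emptyset)$ is equivalent to a formula all of whose quantifiers are relativized to $\mathrm{L}^{\omega}$ (since $\mathrm{HCL}(\emptyset)$-membership is itself an $\mathrm{L}$-relativized statement), and then showing the base theory proves enough separation \emph{inside} $\mathrm{L}^{\omega}$ --- via the $\Sigma_1$ definability of $<_{\mathrm{L}}$ and G\"odel condensation along constructibly countable ordinals, as in Simpson \S VII.4 --- to conclude that the witness sequence is a constructible set, so that your amalgamated $g$ exists and lies in $\mathrm{L}^{\omega}$. As written, the words ``definable, hence constructible'' carry the entire weight of the theorem.

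A second, recurring slip: you conflate \emph{countable} with \emph{constructibly countable}. Membership in $\mathrm{HCL}(\emptyset)$ demands a constructible surjection from $\omega$, so the inferences ``$T$ and $f$ are countable and constructible, hence already elements of $\mathrm{HCL}(\emptyset)$'' and ``$\alpha$ is a countable ordinal, so $L_\alpha$\dots lies in $\mathrm{HCL}(\emptyset)$'' are invalid as stated: $\aleph_1^{\mathrm{L}}$, when countable, is countable and constructible yet not in $\mathrm{HCL}(\emptyset)$ --- precisely the phenomenon the paper highlights immediately after this theorem. Your instances are salvageable, because the witnesses are constructibly countable via $f$ itself, and because condensation supplies for each $u\in\mathrm{HCL}(\emptyset)$ a \emph{constructibly countable} stage $\alpha$ with $u\in L_\alpha$; but your $\mathrm{V=L}$ paragraph needs that condensation argument explicitly, not a bare appeal to ``absoluteness,'' since without it you only obtain $u\in L_\beta$ for some $\beta$ possibly above every ordinal that $\mathrm{HCL}(\emptyset)$ can see.
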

\begin{proof} Simpson Theorem~VII.5.4.  The proof shows $\mathrm{HCL}(\emptyset)$\! satisfies a choice principle stronger than replacement. For every $\phi(x,y)$ with $f$\! not free: 
 \[ \forall x\, \exists y\ \phi(x,y)\ \rightarrow\ \forall u\, \exists f\, \forall x\ (\, x\in u \rightarrow \phi(x,f(x))\,) \qedhere \]
\end{proof}

Looking towards the proof of Theorem~\ref{T:penultimate}, note the sets in $\mathrm{L}^{\omega}$\! are countable but need not all be constructibly countable.  If we interpret this whole construction in  $\mathrm{ZF}+(\mathrm{V=L})$ then all sets in $\mathrm{L}^{\omega}$\! are constructibly countable.  At the other extreme, by interpreting this construction in models of $\mathrm{ZF}$ with cardinal collapse, $\mathrm{L}^{\omega}$\! can include $\aleph_{k}^{\mathrm{L}}$ for any finite $k$ and indeed $\aleph_{\alpha}^{\mathrm{L}}$ for any ordinal $\alpha$.

\section{Constructibility in $\Pi ^{\Omega}_{\infty}\text{-}\mathsf{CA}_0^{\mathrm{set}}$\!}\label{S:heartofitall}
The set theory $\Pi ^{\Omega}_{\infty}\text{-}\mathsf{CA}_0^{\mathrm{set}}$\! is $\Pi ^1_{\infty}\text{-}\mathsf{CA}_0^{\mathrm{set}}$ but with countability replaced by a constant $\Omega$ for an indeterminate ordinal in which all sets hereditarily embed:
\begin{itemize}
    \item $\Omega$ is an ordinal.
    \item $\forall u\ \exists v\ (\, u\subseteq v \land\mathrm{Trans}(v) \land  \exists \text{ one-to-one } \marrow{g}{v}{\Omega}\, )$
\end{itemize}
Yhe case $u=\omega$ implies $\omega\leq \Omega$.   The case $u=\cprod{\Omega}{\Omega}$ implies there are one-to-one pairing functions \clmarrow{\cprod{\Omega}{\Omega}}{\Omega}.  Pick one to write as \marrow{\langle\_,\_\rangle}{\cprod{\Omega}{\Omega}}{\Omega}.  For any nonempty transitive set $u$ any one-to-one function \marrow{g}{u}{\aleph_{n}} will serve to define \emph{G\"odel ordinals} coding the language of set theory augmented by a constant $\underline{a}$ for each $a\in U$.  We work in $\Pi ^{\Omega}_{\infty}\text{-}\mathsf{CA}_0^{\mathrm{set}}$\!.  

For $i\in\omega$ the pair $\langle 0,i\rangle$\! codes variable $v_i$.  For $a\in u$, $\langle 1,g(a)\rangle$ codes the constant $\underline{a}$.  Variables and constants are terms.  When $\sigma,\tau$ code terms then $\langle 2,\langle \sigma,\tau\rangle\rangle$ and  $\langle 3,\langle \sigma,\tau\rangle\rangle$ code formulas $\sigma=\tau$ and $\sigma\in\tau$ respectively.  When $\phi,\psi$ code formulas, $\langle 4,\phi\rangle$ and $\langle 5,\langle \phi,\psi\rangle\rangle$ code $\neg\phi$ and $\phi\land \psi$ and $\langle 6,\langle i,\phi\rangle\rangle$ codes $\forall x_i\phi$.  The methods of  \citet[\S VII.4]{SimpsonBook} work as well in $\Pi ^{\Omega}_{\infty}\text{-}\mathsf{CA}_0^{\mathrm{set}}$\! using these codes to define constructiblity.  Write  $x\in \mathrm{L}_{\alpha}^{\omega}$ to say $x$ is constructed from parameters in $\omega$ by ordinal $\alpha$. 

The proof of Theorem~\ref{T:penultimate} rests on the interplay of two senses of cardinality, which we state precisely.  We define cardinals as well ordered cardinals:

\begin{definition} \emph{(In $\Pi ^{\Omega}_{\infty}\text{-}\mathsf{CA}_0^{\mathrm{set}}$)} 
An ordinal $\alpha$ is a \emph{cardinal} if no ordinal $\beta<\alpha$ admits a surjective function \eparrow{q}{\beta}{\alpha}, and is a \emph{constructible cardinal}  if no ordinal $\beta<\alpha$ admits a constructible surjective function \eparrow{q}{\beta}{\alpha}.
 \end{definition}

\begin{definition} \emph{(In $\Pi ^{\Omega}_{\infty}\text{-}\mathsf{CA}_0^{\mathrm{set}}$)} 
For any ordinals $\alpha,\beta$ write \clmarrow{\alpha}{\beta} to say $\alpha$ has some one-to-one function to $\beta$. For any ordinal $\Theta\leq\Omega$ write  $u\in \mathrm{H\Theta L}$\! to say  $u$ is \emph{hereditarily $\Theta$ constructible} from parameters in $\omega$:
  \[  \exists \,   \clmarrow{\alpha}{\Theta}\ \exists\, f\in  \mathrm{L}^{\omega}_{\alpha}\, ( \mathrm{Fcn}(f)\, \land\, 
                   \mathrm{dom}(f)=\Theta\,  \land\,  
                   u\subseteq \mathrm{rng}(f)\, \land\,  \mathrm{Trans}({rng}(f)) )\]
 \end{definition}

\begin{theorem}\label{T:penultimate}\emph{(In $\Pi ^{\Omega}_{\infty}\text{-}\mathsf{CA}_0^{\mathrm{set}}$)} \quad
 \begin{enumerate}
     \item The $\mathrm{H}\Omega\mathrm{L}$ sets satisfy $\Pi ^{\Omega}_{\infty}\text{-}\mathsf{CA}_0^{\mathrm{set}}$\! plus $\mathrm{V=L}$.
     \item  If constructible cardinal $\aleph_n^{\mathrm{L}}$ exists, the $\mathrm{H\aleph_n^{\mathrm{L}}}$ sets satisfy $\mathrm{ZFG}[n^+]$.
\end{enumerate}
\end{theorem}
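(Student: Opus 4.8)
The plan is to prove the two parts by quite different means. For part~(1) I would transport Theorem~\ref{T:hcl}. Every ingredient of Simpson's proof that $\mathrm{HCL}(\emptyset)$ models $\Pi^1_\infty\text{-}\mathsf{CA}_0^{\mathrm{set}}+(\mathrm{V=L})$ is phrased relative to $\omega$ and its countable ordinals, and \S\ref{S:heartofitall} has already rebuilt each such ingredient relative to $\Omega$---the G\"odel coding, the hierarchy $\mathrm{L}^{\omega}_\alpha$, and the pairing \marrow{\langle\_,\_\rangle}{\cprod{\Omega}{\Omega}}{\Omega}. So I would check, step by step, that the proof survives the substitution of $\Omega$ for $\omega$: constructibility is absolute, so $\mathrm{V=L}$ holds in $\mathrm{H}\Omega\mathrm{L}$; comprehension follows from absoluteness of the $\mathrm{H}\Omega\mathrm{L}$ predicate together with the constructible well-order; and hereditary $\Omega$-embeddability is immediate, since by definition each $u\in\mathrm{H}\Omega\mathrm{L}$ sits inside a transitive $\mathrm{rng}(f)$ equipped with \eparrow{f}{\Omega}{\mathrm{rng}(f)}, whose $\leq_L$-least section injects $\mathrm{rng}(f)$ back into $\Omega$.

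Part~(2) carries the real content. The basic axioms are routine closure properties of $\mathrm{H\aleph_n^{\mathrm{L}}}$, so the work is Replacement, the $\mathrm{ZF}[n^+]$ axiom, and global well-ordering. I would first isolate the structural fact driving everything: membership in $\mathrm{H\aleph_n^{\mathrm{L}}}$ bounds \emph{constructible} size, for if $u\in\mathrm{H\aleph_n^{\mathrm{L}}}$ then $u\subseteq\mathrm{rng}(f)$ for a constructible $f$ with domain $\aleph_n^{\mathrm{L}}$, whence $|u|^{\mathrm{L}}\leq\aleph_n^{\mathrm{L}}$. For Replacement, given such a $u$ and a functional $\phi$, I would invoke $\mathrm{V=L}$ from part~(1) to select the $\leq_L$-least witness $F(x)$ for each $x\in u$, set $\alpha_x$ to be the least stage with $F(x)\in\mathrm{L}^{\omega}_{\alpha_x}$, and form $A=\{\alpha_x\,|\,x\in u\}$ by ambient comprehension inside $\Omega$. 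Since $u$ is constructible and $x\mapsto\alpha_x$ is constructibly definable, $A$ is a constructible set with $|A|^{\mathrm{L}}\leq|u|^{\mathrm{L}}\leq\aleph_n^{\mathrm{L}}$, and each of its elements, being a stage of some $F(x)\in\mathrm{H\aleph_n^{\mathrm{L}}}$, has constructible size $\leq\aleph_n^{\mathrm{L}}$.

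Now the key bound: choosing $\leq_L$-least surjections \eparrow{s_\nu}{\aleph_n^{\mathrm{L}}}{a_\nu} along a constructible enumeration $\langle a_\nu\,|\,\nu<\aleph_n^{\mathrm{L}}\rangle$ of $A$ assembles a single constructible surjection \cleparrow{\cprod{\aleph_n^{\mathrm{L}}}{\aleph_n^{\mathrm{L}}}}{\beta} onto $\beta=\sup A$, so $|\beta|^{\mathrm{L}}\leq\aleph_n^{\mathrm{L}}\cdot\aleph_n^{\mathrm{L}}=\aleph_n^{\mathrm{L}}$. Every $F(x)$ then lies in the transitive set $\mathrm{L}^{\omega}_\beta$, of constructible size $\leq\aleph_n^{\mathrm{L}}$, and ambient comprehension extracts the image from it, placing it in $\mathrm{H\aleph_n^{\mathrm{L}}}$. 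For the $\mathrm{ZF}[n^+]$ axiom I would run the same pattern with a subset count in place of a stage count: in $\mathrm{L}$ one has GCH, so $\beth_n=\aleph_n^{\mathrm{L}}$, and since $\aleph_n^{\mathrm{L}}$ is a successor constructible cardinal (or $\omega$) hence constructibly regular, $(\aleph_n^{\mathrm{L}})^{\kappa}=\aleph_n^{\mathrm{L}}$ for all $\kappa<\aleph_n^{\mathrm{L}}$; thus any $A$ with $|A|^{\mathrm{L}}\leq\aleph_n^{\mathrm{L}}$ has only $\aleph_n^{\mathrm{L}}$ constructible subsets smaller than $\beth_n$, which appear by one bounded stage and collect into $\mathrm{H\aleph_n^{\mathrm{L}}}$. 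Global well-ordering is then $\leq_L$ itself: its proper initial segments are the $\leq_L$-predecessor sets, each of constructible size at most the stage of its top element, hence in $\mathrm{H\aleph_n^{\mathrm{L}}}$ and well-ordered; and Replacement for $\leq_\gamma$-formulas reduces to the case already proved because $\leq_L$ is definable.

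The main obstacle throughout is the single inequality $|\beta|^{\mathrm{L}}\leq\aleph_n^{\mathrm{L}}$, and it is here that the promised interplay of two cardinalities is unavoidable. The set $A$ is produced by an \emph{ambient} comprehension, in a universe where $\aleph_n^{\mathrm{L}}$ may be collapsed and its cofinality destroyed, so no ambient counting can bound $\sup A$; yet the bound must be read off \emph{constructibly}, since it is $\mathrm{L}^{\omega}_\beta$ that must end up $\aleph_n^{\mathrm{L}}$-small. The resolution---and the reason the hypothesis must name a constructible cardinal rather than a mere cardinal---is that $A$ is nevertheless a constructible set of constructible size $\leq\aleph_n^{\mathrm{L}}$, so it is the constructible cardinal arithmetic of $\aleph_n^{\mathrm{L}}$, not the ambient arithmetic, that governs $\sup A$. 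Verifying that $A$ and the assembled surjection are genuinely constructible---definable over the appropriate level $\mathrm{L}^{\omega}_{\beta}$ so as to reappear a step or two higher---is the one place I expect to spend real care.
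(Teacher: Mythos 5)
Your part~(1) follows the paper's own route: the paper proves it precisely by noting that Simpson's Theorem~VII.5.4 adapts with ordinals \clmarrow{\alpha}{\Omega} in place of countable ordinals, which is the substitution you carry out. For part~(2), however, the paper does no fresh verification: since Simpson's $\mathrm{HCL}$ is $\mathrm{H\aleph_0^{L}}$, the same adapted theorem applies with the constructible cardinal $\aleph_n^{\mathrm{L}}$ in the role of the bounding ordinal, delivering $\mathrm{ZF}[0]+(\mathrm{V=L})$ together with the strong choice scheme of Theorem~\ref{T:hcl} (which yields Replacement outright); $\mathrm{ZF}[n^+]$ and global well-ordering then fall out of $\mathrm{V=L}$ via the GCH fact quoted in Section~1 ($\aleph_m=\beth_m$ when $\aleph_m$ exists) and the order $\leq_L$ --- hence ``Part~2 is immediate.'' You instead rebuild Replacement by hand, and that is where a genuine gap appears.

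The gap is the step ``form $A=\{\alpha_x\,|\,x\in u\}$ by ambient comprehension inside $\Omega$.'' Comprehension in $\Pi ^{\Omega}_{\infty}\text{-}\mathsf{CA}_0^{\mathrm{set}}$ is separation: it carves a subset out of a pre-existing set, so you need in advance a set containing all the stages $\alpha_x$. None is available. ``Inside $\Omega$'' is false: the axioms say only that every set hereditarily \emph{injects} into $\Omega$, so the ordinals form a proper class of order types unbounded above $\Omega$ (every well-ordering of a subset of $\Omega$ yields one), and the least stage of $F(x)$ is merely of constructible cardinality $\leq\aleph_n^{\mathrm{L}}$, not of order type below $\Omega$. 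Collecting the range of the class function $x\mapsto\alpha_x$ is exactly an instance of the Replacement you are proving, and your argument is circular: the bound $|\beta|^{\mathrm{L}}\leq\aleph_n^{\mathrm{L}}$ with $\beta=\sup A$ is extracted only \emph{after} $A$ exists as a set, while $A$'s sethood is the whole problem. Your final paragraph names the right obstacle, but the offered resolution --- ``$A$ is nevertheless a constructible set'' --- presupposes that $\mathrm{L}^{\omega}$ satisfies Replacement (so that the $\mathrm{L}$-definable image of the constructible set $u$ is a set in $\mathrm{L}$), and the ambient theory, having no Replacement, does not give this for free; moreover $x\mapsto\alpha_x$ factors through truth of $\phi$ relativized to $\mathrm{H\aleph_n^{L}}$, whose absoluteness to $\mathrm{L}$ itself requires a condensation argument. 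Supplying exactly these facts is the content of Simpson's \S VII.4--VII.5 machinery, which the paper imports wholesale rather than reproving; the clean repair is the paper's: relativize the already-adapted Theorem~\ref{T:hcl} to $\aleph_n^{\mathrm{L}}$ and read Replacement off its strong choice scheme, rather than reconstructing it from separation.
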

\begin{proof}   Simpson's $\mathrm{HCL}$ is our $\mathrm{H\aleph_0^{\mathrm{L}}}$.    Simpson's Theorem~VII.5.4 adapts to show part 1 using ordinals \clmarrow{\alpha}{\Omega} in place of countable ordinals.  Part 2 is immediate. 
\end{proof}

\section{$Z_{n+2}$ interprets $\mathrm{ZFG}[n^+]+(\mathrm{V=L})$}
\begin{definition} \emph{(In $Z_{n+2}$)} 
\begin{itemize}
 \item  For any class $C$\! a \emph{suitable $C$-tree}\! is a class $T$  of finite sequences of elements of $C$\!,
              closed under initial segments and having no path. 
 \item The \emph{canonical tree} $C^*$ on any well ordered class $C$ is the class of strictly descending sequences of 
                         elements of $C$.   So it is a suitable $C$-tree.
\end{itemize}
\end{definition}

\begin{theorem} \emph{(In  $Z_{n+2}$)}  For any definably well ordered class $C$ the $C$-suitable trees satisfy $\Pi ^{\Omega}_{\infty}\text{-}\mathsf{CA}_0^{\mathrm{set}}$\!.
\end{theorem}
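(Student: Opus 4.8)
The plan is to adapt the tree interpretation behind Theorem~\ref{T:biinterpretation}, which yields the result for $C=\omega$ in $Z_2$, by generalizing it in two independent directions at once: upward from order $2$ to order $n+2$, and outward from the index set $\omega$ to the definably well ordered class $C$. First I would fix a coding of finite sequences of elements of $C$, so that a suitable $C$-tree is a class of codes of such sequences, closed under initial segments and with no path. Because ``no path'' is exactly well-foundedness of the daughter relation, the relations $\in^*$ and $=^*$ of \citep[pp.~264--65]{SimpsonBook} are pinned down by recursion up the well-founded tree and are expressed by formulas of the arithmetic, exactly as in the $Z_2$ case. The only change here is bookkeeping: elements of $C$ replace natural numbers in the node entries, $C$ itself sits at order $n+1$ so that its trees land at the top order $n+2$, and quantifiers that ranged over order $2$ classes now range over order $n+2$ classes.

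With $\in^*$ and $=^*$ available, the axioms inherited from $\Pi ^1_{\infty}\text{-}\mathsf{CA}_0^{\mathrm{set}}$ transfer essentially verbatim. Extensionality and regularity fall out of the definition of $=^*$ and the well-foundedness of trees. Empty set, pair set, sum set, and infinity are explicit tree constructions: the single-node tree codes $\varnothing$; a root with two reindexed copies of given trees below it codes their pair; the sum set is obtained by grafting the granddaughter subtrees onto the root; and the canonical tree on an initial segment of $C$ of order type $\omega$ codes $\omega$. The substantial inherited axiom is unrestricted comprehension: given a tree $T$ coding $u$ and a set-theoretic formula $\phi$, one keeps the root together with those daughter subtrees whose coded element satisfies $\phi$. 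Translating $\phi$ through $\in^*$ and $=^*$ turns it into a formula of $Z_{n+2}$ whose quantifiers over sets become quantifiers over order $n+2$ classes; impredicative comprehension at order $n+2$ then delivers the qualifying class of sequences as a subtree. This is precisely the step that uses the full comprehension of $Z_{n+2}$ at the top order.

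The genuinely new content is the pair of axioms governing $\Omega$. I would interpret $\Omega$ by the canonical tree $C^{*}$, the strictly descending sequences of elements of $C$; since $C$ is well ordered this tree has no path, so it is a suitable $C$-tree, and it codes the ordinal equal to the order type of $C$, verifying that $\Omega$ is an ordinal. For hereditary embedding I would pass, from any tree $T$, to the tree coding the transitive closure of $u$, whose members are coded by the subtrees of $T$ and hence are indexed by nodes of $T$, that is by finite sequences over $C$. Well ordering those sequences by the $C$-order yields a one-to-one map of the coded transitive set into the order type of $C$, i.e.\ into $\Omega$, and this map is itself exhibited as a tree.

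I expect hereditary embedding into $\Omega$ to be the main obstacle, since it is the one axiom with no analog in $\Pi ^1_{\infty}\text{-}\mathsf{CA}_0^{\mathrm{set}}$ beyond plain hereditary countability. Two points need care. First, to bound the transitive closure one must inject the finite sequences over $C$ into the order type of $C$; this uses G\"odel's ordinal pairing on the definable well order of $C$, valid for infinite $C$. Second, a node of $T$ determines its subtree only up to $=^*$, so the well order on nodes must be pushed down to a well order on $=^*$-classes, by taking the least node in each class, before it yields a genuine one-to-one map of the coded transitive set into $C^{*}$; establishing that this descent is itself definable and codable as a tree in $Z_{n+2}$ is the delicate part.
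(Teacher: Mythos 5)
Your overall route coincides with the paper's own proof, which consists entirely of the remark that Simpson's Theorem VII.3.33 adapts with the fixed well order $C$ replacing $\omega$, coding the sets $\omega$ and $\Omega$ by the canonical trees on the well ordered classes $\omega$ and $C$. That is exactly your plan: port $\in^*$ and $=^*$, transfer the inherited axioms with full order-$(n+2)$ comprehension carrying the comprehension scheme, interpret $\Omega$ as $C^*$, and verify hereditary embedding by injecting the nodes of a suitable $C$-tree, modulo $=^*$ (your least-representative device is the standard and correct move), into the order type of $C$.

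There is, however, one concrete flaw, and it sits in the step you yourself single out as the crux. You justify the injection of finite sequences over $C$ into the order type of $C$ by ``G\"odel's ordinal pairing on the definable well order of $C$, valid for infinite $C$.'' That claim is false for general infinite order types: the G\"odel pairing $\Gamma$ maps $\alpha\times\alpha$ into $\alpha$ only for $\Gamma$-closed ordinals (those of the form $\omega^{\omega^{\beta}}$, which include every infinite cardinal but not, e.g., $\omega+1$, $\omega\cdot 2$, or $\omega^{2}$; already $\Gamma(\omega,\omega)=\omega\cdot 3$ escapes $\omega\cdot 2$), and iterating the pairing to code sequences escapes further. Since the theorem quantifies over \emph{all} definably well ordered $C$, your verification of the one genuinely new axiom breaks as written. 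The gap is repairable within your architecture: either (i) first define in $Z_{n+2}$, by transfinite recursion along $C$ implemented through impredicative comprehension, a bijection of $C$ with its least equinumerous initial segment, whose order type is an initial ordinal and hence $\Gamma$-closed, and only then pair; or (ii) replace $C$ by $C^{<\omega}$ under the length-lex well order, which admits a definable flattening $(C^{<\omega})^{<\omega}\rightarrowtail C^{<\omega}$ (prefix each concatenation with an element of the $\omega$-type initial segment of $C$ coding the block lengths), at the harmless cost of enlarging the indeterminate $\Omega$ --- and note that the classes $C^{k}$ the paper actually feeds into this theorem are closed in the required sense, since they enumerate all ordinals of a constructibility interpretation. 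Without some such repair, ``valid for infinite $C$'' is the point where your proof of hereditary embedding into $\Omega$ fails.
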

\begin{proof} The proof of Simpson's Theorem 3.33 adapts to any fixed well order replacing $\omega$.  Code the sets $\omega$ and $\Omega$ by the canonical trees for the well ordered classes $\omega,C$.
\end{proof}

By Theorem~\ref{T:penultimate} this paper is done when we show:

\begin{theorem}  $Z_{n+2}$ proves there is a definably well ordered class $C$\! such that the suitable $C$-tree
              interpretation includes $\aleph_n^{\mathrm{L}}$.
\end{theorem}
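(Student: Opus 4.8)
The plan is to exhibit $C$ as a constructible well-ordering, definable in $Z_{n+2}$, whose order type is the $n$-th constructible cardinal $\aleph_n^{\mathrm{L}}$. The governing observation is a cardinality count: the order $n+1$ classes realize the cardinality $\beth_n$, while $\mathrm{L}$ satisfies GCH in the form $\aleph_n^{\mathrm{L}}=\beth_n^{\mathrm{L}}$ (the constructibility arguments cited above for $\mathrm{ZF}[0]$ adapt both to $\Pi ^{\Omega}_{\infty}\text{-}\mathsf{CA}_0^{\mathrm{set}}$ and to the $\mathrm{L}$-development inside $Z_{n+2}$). Hence $\aleph_n^{\mathrm{L}}$ injects into the order $n+1$ classes, so there is exactly room to well-order order $n+1$ objects in type $\aleph_n^{\mathrm{L}}$; the canonical tree on such a class is then an order $n+2$ class, precisely the resource $Z_{n+2}$ supplies.

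First I would extend Simpson's development of $\mathrm{L}$ from $Z_2$ to $Z_{n+2}$, so that the constructible well-ordering $<_{\mathrm{L}}$ is definable on the constructible classes of order $\le n+1$ and each $\mathrm{L}^{\omega}_{\alpha}$ is available for ordinals $\alpha$ coded by canonical trees. Then I would build, by induction on $k\le n$, a definably well-ordered class $C_k$ of order $k+1$ objects with order type at least $\aleph_k^{\mathrm{L}}$. The base $C_0=\omega$ under its usual order has type $\aleph_0^{\mathrm{L}}=\omega$. For the step, given $C_{k-1}$ of type at least $\aleph_{k-1}^{\mathrm{L}}$ in order $k$ objects, the constructible subsets of the field of $C_{k-1}$ are order $k+1$ objects; by GCH in $\mathrm{L}$ there are $\aleph_k^{\mathrm{L}}$ of them, and ordering them by $<_{\mathrm{L}}$ yields $C_k$ of type at least $\aleph_k^{\mathrm{L}}$.

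Setting $C=C_n$ gives an order $n+1$ class whose canonical tree $C^*$ is an order $n+2$ class, hence a suitable $C$-tree available in $Z_{n+2}$. In the $C$-tree interpretation $\Omega$ is coded by $C^*$, so $\Omega=\mathrm{ot}(C)\ge \aleph_n^{\mathrm{L}}$; since every ordinal below $\Omega$ exists in the interpretation and its $\mathrm{L}$ is built by the same construction, the constructible cardinal $\aleph_n^{\mathrm{L}}$ exists there. This is exactly what Theorem~\ref{T:penultimate}(2) requires, whence the $\mathrm{H\aleph_n^{\mathrm{L}}}$ sets of the interpretation model $\mathrm{ZFG}[n^+]$, completing the paper.

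The main obstacle is \emph{absoluteness}: I must ensure that the ordering $<_{\mathrm{L}}$ and the cardinal counts computed externally in $Z_{n+2}$ agree with those computed inside the $C$-tree model, so that the order type I engineer really is the interpretation's $\aleph_n^{\mathrm{L}}$ and is not collapsed there by a constructible surjection from a smaller ordinal. Because $C$ is itself constructible, the model should recognize its own $\mathrm{L}$-structure up to level $\Omega$, and the standard absoluteness of the constructible hierarchy, together with the condensation argument underlying GCH in $\mathrm{L}$ adapted to these weak systems, should force the match. Pinning down this absoluteness order by order, rather than the routine cardinality bookkeeping of the inductive step, is the delicate part.
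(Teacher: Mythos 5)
Your plan has the right general shape---iterate a power-class-like step through the orders, well-ordering each new level by a constructibility order---and in that respect it parallels the paper's iteration $C^0=\omega, C^1, \dots, C^n$. But the step you justify by counting is exactly where the proof's real content lies, and your justification does not go through. The claim ``by GCH in $\mathrm{L}$ there are $\aleph_k^{\mathrm{L}}$ constructible subsets of the field of $C_{k-1}$'' is circular in $Z_{n+2}$: there is no ambient set theory in which that count can be made. The only meaning ``$\aleph_k^{\mathrm{L}}$'' has is internal to a tree interpretation, and GCH inside $\mathrm{L}$ is a conditional statement---\emph{if} $\aleph_k^{\mathrm{L}}$ exists in the interpretation, \emph{then} the constructible power set of $\aleph_{k-1}^{\mathrm{L}}$ has that size. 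Whether $\aleph_k^{\mathrm{L}}$ exists in the interpretation built over $C_{k-1}$ is precisely what must be proven (the paper's remark after Theorem~\ref{T:hcl} shows the interpretation's stock of cardinals genuinely varies with the model), so GCH cannot supply it. What is actually needed is a $Z_{n+2}$-level proof that the new class admits \emph{no surjection at all} from the old one, so that its canonical tree codes a genuine new cardinal in the next interpretation. Note that Cantor diagonalization cannot deliver this for your $C_k$ of ``all constructible subsets'': given a class surjection $q$ onto the constructible subsets, the diagonal class $\{x \mid x\notin q(x)\}$ is definable but need not be constructible, so it need not be in the range and no contradiction results.

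The paper closes this gap differently: Lemma~\ref{L:stepup} takes one representative subtree per \emph{ordinal} of the current interpretation (not all constructible subsets), under the hypothesis that the interpretation has a largest constructible cardinal, and Lemma~\ref{L:noncollapse} then rules out a surjection \cleparrow{C}{C'} by a Burali-Forti argument: the canonical tree on $C'$ codes an ordinal above every ordinal of the $C$-tree interpretation, so it cannot be tree-equal to any $C$-tree, while a surjection would manufacture exactly such a $C$-tree. This makes the new ordinal an actual cardinal of the next interpretation with no absoluteness analysis and no cardinal arithmetic; the main theorem then needs only the case split ``either $\aleph_n^{\mathrm{L}}$ is already present, or step up,'' reaching $\aleph_n^{\mathrm{L}}$ by $k=n$ without ever needing your engineered order type to be \emph{exactly} $\aleph_n^{\mathrm{L}}$. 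Your own closing paragraph identifies absoluteness and non-collapse as ``the delicate part'' but leaves it unresolved; since that is the entire mathematical burden of the theorem, the proposal as it stands is a program outline with the key lemma missing, and its GCH-based bookkeeping would have to be replaced by something like the paper's ordinal-representative construction and non-collapse argument (or a class-level condensation apparatus you would have to build from scratch, which the paper deliberately avoids by developing constructibility only inside the tree interpretations).
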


\begin{proof} By Theorem~\ref{T:hcl}, the order 2 class $C^0=\omega$ gives a constructibility interpretation including $\aleph_0$ and possibly larger cardinals.  If it has at least $n$ successive constructible cardinals $\aleph_0,\dots,\aleph^\mathrm{L}_n$ the proof is done.  

Otherwise $\mathrm{L}^{\omega}$ has a largest constructible ordinal $\aleph^\mathrm{L}_i$ (in fact $i<n$ but we do not use that).  So each ordinal in $\mathrm{L}^{\omega}$ is constructibly embedded in  $\aleph_i^{\mathrm{L}}$ and so $\mathrm{H\aleph_i^{\mathrm{L}}}=\mathrm{L}^{\omega}$\!.  By Lemma~\ref{L:stepup} there is a definably well ordered order $3$ class of selected trees coding representatives of all ordinals in $\mathrm{L}^{\omega}$\!.   Call this class $C^1$\!.  

Form the suitable $C^1$-tree interpretation, and the constructibility interpretation within it.  Call that constructibility interpretation $\mathrm{L}^{\omega,1}$\!.  Like $\mathrm{L}^{\omega}$ it starts with set parameters in $\omega$, but its ordinals are given by suitable $C^1$-trees rather than suitable $\omega$-trees.  It includes intuitively all ordinals of cardinality $C^1$\!.  By Lemma~\ref{L:noncollapse} there is no surjective function between the classes \cleparrow{\omega}{C^1}\!.  Quite apart from issues of constructibility, $Z_{n+2}$ proves there is no such surjection.  Since the canonical tree $C^{1*}$\! codes an ordinal, it follows that $\mathrm{L}^{\omega,1}$\! includes at least a cardinal $\aleph_1$\! (not merely constructible $\aleph_1^{\mathrm{L}}$).

If $\mathrm{L}^{\omega,1}$\! includes at least $\aleph^\mathrm{L}_n$\! the proof is done.  Otherwise $\mathrm{L}^{\omega,1}$\! has a largest constructible ordinal $\aleph^\mathrm{L}_i$ and $\mathrm{H\aleph_i^{\mathrm{L}}}=\mathrm{L}^{\omega,1}$\!.   Lemmas~\ref{L:stepup} and~\ref{L:noncollapse} show there is a definably well ordered order $4$ class of selected trees coding all ordinals in $\mathrm{L}^{\omega,1}$\!, call it $C^2$\!, it naturally includes a copy of $C^1$ as initial segment, and there is no surjective function \cleparrow{C^1}{C^2}\!.  The corresponding constructibility interpretation $\mathrm{L}^{\omega,2}$\! extends $\mathrm{L}^{\omega,1}$\! and includes at least a cardinal $\aleph_2$.

As long as we do not reach $\aleph^\mathrm{L}_{n}$ we get new well ordered classes $C^{k+1}$ each of order one higher than the preceding $C^k$\! and with an initial segment copying it.  So $\mathrm{L}^{\omega,k+1}$\! includes a copy of $\mathrm{L}^{\omega,k}$\!.   None of these classes $C^k$\! admits any surjection from any earlier one, so their canonical trees represent distinct cardinals in $\mathrm{L}^{\omega,k+1}$\!.   By $k=n$ at the latest we reach $\aleph^\mathrm{L}_n$.
\end{proof}

\begin{definition} A \emph{membership determined} subtree of a tree $T$ is a subclass $S\subset T$ such that for every sequence $\sigma$ in $T$, $\sigma$ is in $S$ iff its first entry is in $S$.
\end{definition}

So, in the suitable $C$-tree interpretation of set theory for any well ordered set $C$, every subset of the set coded by the canonical tree on $C$ is coded by a unique membership determined subtree of that canonical tree.  

\begin{lemma}\label{L:stepup} \emph{(In $Z_{n+2}$)}  Suppose the constructible suitable tree interpretation $\mathrm{L}^{\omega,C}$ on a definably well ordered class $C$\! of order $i<n+2$ includes a constructible cardinal $\aleph_{\alpha}^{\mathrm{L}}$ with $\mathrm{H\aleph_{\alpha}^{\mathrm{L}}}=\mathrm{L}^{\omega,C}$\!.  Then there is a definably well ordered class $C'$\! of order $i+1$ containing one representative for each ordinal in $\mathrm{L}^{\omega,C}$\!.
\end{lemma}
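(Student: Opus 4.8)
The plan is to pick a canonical tree-code for each ordinal of $\mathrm{L}^{\omega,C}$ and then order these codes by the ordinal ordering. The methods of Simpson's~\S VII.4, adapted to suitable $C$-trees as in the preceding theorems, make $\in^*$ and $=^*$ definable on the order-$i$ suitable $C$-trees and equip the interpretation $\mathrm{L}^{\omega,C}$ with a definable well ordering $<_{\mathrm{L}}$ of its trees. For each ordinal $\xi$ of $\mathrm{L}^{\omega,C}$ --- an $=^*$-class of suitable $C$-trees coding a transitive set linearly ordered by $\in^*$ --- let $T_\xi$ be the $<_{\mathrm{L}}$-least suitable $C$-tree coding $\xi$. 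Because $\xi$ lies in $\mathrm{L}^{\omega,C}$ at least one such tree exists, and since $<_{\mathrm{L}}$ is a definable well order the least one is unique, so this assigns exactly one representative tree to each ordinal.

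Next I would form $C'$ by comprehension. The condition that $T$ is a suitable $C$-tree coding an ordinal of $\mathrm{L}^{\omega,C}$ and is $<_{\mathrm{L}}$-least among trees coding that ordinal is a single formula of $Z_{n+2}$ with $T$ ranging over order-$i$ classes. Since $i<n+2$, order $i+1$ comprehension is available, so $C'=\{\,T_\xi \mid \xi\ \mbox{an ordinal of}\ \mathrm{L}^{\omega,C}\,\}$ is a legitimate order $i+1$ class. I would order it by setting $T_\xi \leq_{C'} T_{\xi'}$ exactly when the set coded by $T_\xi$ is an $\in^*$-member of, or $=^*$ to, the set coded by $T_{\xi'}$. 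This is a definable linear order, and because distinct ordinals receive distinct representatives it is isomorphic to the $\in^*$-ordering of the ordinals of $\mathrm{L}^{\omega,C}$.

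It remains to check that $\leq_{C'}$ is definably well ordered, i.e.\ that every proper initial segment is a well ordered \emph{set}. The segment below $T_\xi$ consists of the representatives $T_\eta$ with $\eta \in^* \xi$; under the isomorphism with the ordinals of $\mathrm{L}^{\omega,C}$ it corresponds to the set $\xi$ itself, which the interpretation codes by the single order-$i$ tree $T_\xi$ and proves to be a well ordered set. This is exactly where the hypothesis $\mathrm{H}\aleph_\alpha^{\mathrm{L}}=\mathrm{L}^{\omega,C}$ is used: it forces every ordinal $\xi$ to be hereditarily of constructible cardinality at most $\aleph_\alpha^{\mathrm{L}}$, hence to be coded by a single suitable $C$-tree rather than only by a proper class of trees, so each initial segment really is a coded set. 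I expect this point --- that bounded hereditary constructibility makes each initial segment a single coded well ordered set while the whole collection of representatives remains an order $i+1$ class --- to be the main obstacle; the representative selection and the comprehension step are then routine given the definable well order of $\mathrm{L}^{\omega,C}$.
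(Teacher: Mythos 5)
Your overall architecture matches the paper's (choose a canonical tree for each ordinal, collect the representatives by comprehension into an order $i+1$ class, order them by the ordinal ordering), but your selection step has a genuine gap: you take $T_\xi$ to be ``the $<_{\mathrm{L}}$-least suitable $C$-tree coding $\xi$,'' and no such tree is well defined. The constructibility well order $<_{\mathrm{L}}$ is an order on the \emph{sets} of the interpretation, that is, on $=^*$-equivalence classes of trees; the (class-many) trees coding $\xi$ are all $=^*$-equal and so occupy the same position in $<_{\mathrm{L}}$, which therefore cannot distinguish among them. Nor does $Z_{n+2}$ supply any other definable well order of all order-$i$ classes: comprehension gives no definable choice at that level (if it did, the whole constructibility detour would be unnecessary). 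Breaking this tie --- getting from a canonical \emph{set} of the interpretation to a canonical \emph{actual class} representing it --- is precisely the difficulty the lemma exists to solve, and your proposal assumes it away.

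The paper resolves it by pushing the $<_{\mathrm{L}}$-choice one level down and then invoking a uniqueness fact about trees. Since $\mathrm{H}\aleph_\alpha^{\mathrm{L}}=\mathrm{L}^{\omega,C}$, every ordinal $\xi$ of the interpretation has constructible cardinality at most $\aleph_\alpha^{\mathrm{L}}$, hence via a pairing function \marrow{\langle\_,\_\rangle}{\cprod{C^*}{C^*}}{C^*} is isomorphic to well order relations coded by \emph{subsets} of the set coded by the canonical tree $C^*$. These subsets are sets of the interpretation, genuinely comparable under $<_{\mathrm{L}}$, so one may take the $<_{\mathrm{L}}$-least such subset $S_\xi$. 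The remark preceding the lemma then does the second half of the work: every subset of the set coded by $C^*$ is coded by a \emph{unique} membership determined subtree of $C^*$, and that unique subtree $\overline{\xi}$ is the canonical order-$i$ class representing $\xi$. Note this is also a different use of the hypothesis than the one you propose: you say it makes each $\xi$ ``coded by a single suitable $C$-tree rather than only by a proper class of trees,'' but every ordinal of $\mathrm{L}^{\omega,C}$ is by definition coded by some $C$-tree (indeed by class-many of them); what the hypothesis actually buys is the embedding of $\xi$ into $C^*$ that makes the subset representative $S_\xi$, and with it the canonical membership determined subtree, available. Your comprehension and ordering steps are fine once a canonical $T_\xi$ is in hand, but as written the construction of $T_\xi$ does not go through.
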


\begin{proof} Pick a pairing function \marrow{\langle\_,\_\rangle}{\cprod{C^*}{C^*}}{C^*} in the set theoretic interpretation.  Each ordinal $\alpha$ in $\mathrm{L}^{\omega,C}$\! is isomorphic to many different relations on $C^*$ (interpreted as a set) and thus to many subsets of $C^*$, but one of these subsets comes first in the constructibility order.  Call that subset $S_{\alpha}$.  Then $\alpha$ as a set in the $\mathrm{L}^{\omega,C}$\!  interpretation is uniquely determined by the canonical subtree coding $S_{\alpha}$.  Call that tree $\overline{\alpha}$ the chosen representative of $\alpha$.

So define $C'$ as the order 3 class of all finite sequences $\langle\overline{\alpha} \rangle^\smallfrown\psi$ for $\alpha$ an ordinal in  $\mathrm{L}^{\omega,C}$\! and $\psi$ the singleton image of a sequence $\sigma\in\overline{\alpha}$.  That is, the successive entries in $\psi$ are the order 2 singleton classes of the successive order 1 entries in $\sigma$.   The order relation on ordinals in $\mathrm{L}^{\omega,C}$\! gives a definable well order on $C'$\!. 
\end{proof}

\begin{lemma}\label{L:noncollapse}  In the situation of Lemma~\ref{L:stepup} there is no surjective function \cleparrow{C}{C'}\!.
\end{lemma}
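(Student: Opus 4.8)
The plan is to recognize $C'$ as (a copy of) the Hartogs object of $C$ and to run a Hartogs/Burali--Forti self-reference, after first converting a surjection into an injection. Since $C$ carries a definable well order, any surjection \cleparrow{C}{C'} yields an injection \clmarrow{C'}{C}, sending each $y\in C'$ to the $\leq_C$-least element of its preimage; comprehension makes this injection a class of $Z_{n+2}$, so it suffices to prove that $C'$ does not inject into $C$.

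Next I would pin down what $C'$ measures. By the construction in Lemma~\ref{L:stepup}, sending each $\langle\overline{\alpha}\rangle^\smallfrown\psi$ to $\alpha$ is a monotone surjection of $C'$ onto the class $\mathrm{On}^{\mathrm{L}^{\omega,C}}$ of ordinals of $\mathrm{L}^{\omega,C}$, while $\alpha\mapsto$ (the root of $\overline{\alpha}$) embeds $\mathrm{On}^{\mathrm{L}^{\omega,C}}$ back into $C'$. The decisive observation is that $\mathrm{On}^{\mathrm{L}^{\omega,C}}$ is exactly the class of ordinals that embed into $C$. On one hand every ordinal of $\mathrm{L}^{\omega,C}$ is a set of the tree model, hence $C$ sized, hence, $C$ being well ordered, admits \clmarrow{\alpha}{C}. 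On the other hand any \clmarrow{\alpha}{C} realizes $\alpha$ as a well ordering of a subclass of $C$, so $\alpha$ is a $C$ sized set of the tree model, and since ordinals are automatically constructible, $\alpha$ lies in $\mathrm{L}^{\omega,C}$. Thus $C'$ represents precisely the order types of well orderings of subsets of $C$.

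The contradiction is then Hartogs. Assuming \clmarrow{C'}{C} and composing with the root embedding gives \clmarrow{\mathrm{On}^{\mathrm{L}^{\omega,C}}}{C}; transporting the well order along this injection produces a well ordering $W$ of a subclass of $C$ whose order type is the whole of $\mathrm{On}^{\mathrm{L}^{\omega,C}}$. But $W$ is itself a well ordering of a subset of $C$, so by the previous paragraph its order type is a single $C$ sized ordinal, an element of $\mathrm{On}^{\mathrm{L}^{\omega,C}}$ lying strictly below its supremum. This contradicts the order type of $W$ being all of $\mathrm{On}^{\mathrm{L}^{\omega,C}}$.

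The hard part is justifying that $W$ genuinely has an order type represented inside $C'$, since the tree model satisfies only $\Pi^{\Omega}_{\infty}\text{-}\mathsf{CA}_0^{\mathrm{set}}$, which has neither replacement nor power set, so a set well ordering need not be collapsible to an ordinal there. I would settle this inside $Z_{n+2}$ rather than inside the tree model: the $\Omega$ axiom well orders every set, supplying a choice principle, and comparability of well orderings (provable in $Z_{n+2}$ by the usual recursion, carried out by comprehension at order $i+1$) shows $W$ is order isomorphic to a unique initial segment of the scale $C'$, which is exactly the existence of its order type as a position in $C'$. Equivalently one may argue by supremum: the composite surjection of $C$ onto $\mathrm{On}^{\mathrm{L}^{\omega,C}}$, together with the $\Omega$ induced well ordering, assembles a single $C$ indexed family of surjections onto the successors $\alpha+1$, whose union $\delta$ is a $C$ sized ordinal strictly dominating every $C$ sized ordinal and hence dominating itself. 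In either form, the hypothesis $\mathrm{H}\aleph_{\alpha}^{\mathrm{L}}=\mathrm{L}^{\omega,C}$ is exactly what forces $\aleph_{\alpha}^{\mathrm{L}}$ to be the largest constructible cardinal, so that no non constructible collapse has already dropped $\mathrm{On}^{\mathrm{L}^{\omega,C}}$ among the $C$ sized ordinals; this is what makes $C'$ the true Hartogs successor of $C$.
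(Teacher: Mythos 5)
Your overall skeleton is the right one, and it is in fact the same underlying idea as the paper's proof: the paper also runs a Burali--Forti argument at the top of the internal ordinal scale, showing the canonical tree $C'^*$ codes an ordinal containing every ordinal of the suitable $C$-tree interpretation, so that $=^*$ cannot hold between $C'^*$ and any $C$-tree, and then transporting $C'^*$ along a putative surjection \eparrow{q}{C}{C'} to get a contradiction. You also correctly isolate the crux: the interpreted theory has no replacement, so an internal well ordering need not be collapsible to an internal ordinal, and some external argument must supply the collapse. The problem is that neither of your two proposed fixes actually supplies it. The comparability fix proves only that $W$ is isomorphic to an initial segment of $C'$ \emph{or vice versa}, where the initial segment may be improper; but in your reductio $W$ was built by transporting $C'$ along the injection \clmarrow{C'}{C}, so $W\cong C'$ exactly, and comparability returns the improper case with no contradiction. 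Concluding instead that the order type of $W$ is ``a position in $C'$'' --- a proper initial segment --- is precisely the Hartogs property you are trying to prove, so this branch is circular. The supremum fix has the dual defect: you assert that a $C$-indexed family of surjections has a union $\delta$ which is a $C$-sized (internal) ordinal, but forming that supremum is exactly the replacement-flavored step the internal theory cannot perform, and you never exhibit $\delta$ as a suitable $C$-tree. Asserting the union exists begs the question in the same way.

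What is missing is a concrete external construction doing the collapse, and this is exactly what the paper supplies: given \eparrow{q}{C}{C'}, it defines the suitable $C$-tree $Q^*$ of all sequences $\langle\beta\rangle^\smallfrown\sigma$ with $q(\beta)=\overline{\alpha}$ and $\sigma\in\overline{\alpha}$, and writes down the explicit tree isomorphism witnessing $Q^*=^*C'^*$ --- this re-indexing along $q$ is the ``union'' your second fix gestures at, made into an actual class by comprehension in $Z_{n+2}$. Your own route could be repaired the same way without any appeal to comparability: the transported well order $W$ on a subclass of $C$ has a canonical tree of strictly $W$-descending finite sequences, which is a suitable $C$-tree coding an internal ordinal isomorphic to $W$ (the paper's canonical-tree device); then $W$ is isomorphic both to a proper initial segment of the internal ordinal scale and, by construction, to all of $C'$, and rigidity of well orders gives the contradiction. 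Until one of these explicit tree constructions is inserted, the proposal has a genuine gap at exactly the step you flagged as hard. (Your closing sentence about $\mathrm{H}\aleph_{\alpha}^{\mathrm{L}}=\mathrm{L}^{\omega,C}$ forcing $\aleph_{\alpha}^{\mathrm{L}}$ to be the largest constructible cardinal also plays no role in any correct version of the argument and should be dropped.)
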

\begin{proof}   The canonical tree $C'^*$\! on $C'$\! codes an ordinal such that $\alpha\in C'^*$\! for each ordinal in the  suitable $C$-tree interpretation.   So the tree equality relation $=^*$ cannot hold between $C'^*$ and any $C$ tree.  But if there is a surjective \eparrow{q}{C}{C'} between the classes then take the suitable $C$-tree $Q^*$\! containing all sequences $\langle\beta \rangle^\smallfrown\sigma$  such that $q(\beta)=\overline{\alpha}$ for some $\overline{\alpha}\in C'$ and  $\sigma\in\overline{\alpha}$.  Then $Q^*=^*C'^*$ by the isometry relating each $\langle\beta \rangle^\smallfrown\sigma\in Q^*$ to $\langle\overline{\alpha} \rangle^\smallfrown\psi$ where $q(\beta)=\overline{\alpha}$ and $\psi$ is the singleton image of  $\sigma$. 
\end{proof}


\end{document}